\numberwithin{equation}{section}
\theoremstyle{remark}
\newtheorem{remark}{Remark}
\newtheoremstyle{mytheorem}{0.5cm}{0.2cm}{\slshape}{ }{\bfseries}{.}{ }{}
\theoremstyle{mytheorem}
\newtheorem{theorem}{Theorem}[section]
\newtheorem{lemma}[theorem]{Lemma}
\newtheorem{corollary}[theorem]{Corollary}
\renewcommand{\P}{\mathbf{P}}
\renewcommand{\O}{\mathcal{O}}
\newcommand{\ti}{{\to \infty}}
\newcommand{\Prob}[1]{\mathbf{P}\left\{#1\right\}}
\newcommand{\E}{\mathbf{E}}
\newcommand{\Ind}{\mathbf{1}}
\newcommand{\standardspace}[1]{\mathbb{#1}}
\newcommand{\R}{\standardspace{R}}
\newcommand{\Sphere}{{\standardspace{S}^{d-1}}}
\renewcommand{\S}{{\standardspace{S}}}
\newcommand{\Ball}{{\standardspace{B}^d}}
\newcommand{\Beta}{\mathrm{B}}
\newcommand{\thf}{\frac{1}{2}}
\DeclareMathOperator{\peri}{peri}
\newcommand{\st}{\tilde{s}}
\begin{document}

\bibliographystyle{plain}

\title{$U$-max-Statistics}

\author{W.\ Lao\footnote{Institute of Stochastics,
    University of Karlsruhe, Englerstrasse 2, Karls\-ruhe, 76128
    Germany} \ and M.\ Mayer\footnote{Department of Mathematical
    Statistics and Actuarial Science, University of Bern,
    Sidlerstrasse 54, CH-3012 Bern, Switzerland. Supported by
    Swiss National Foundation Grant No.  200021-103579}}
\date{}
\bibliographystyle{plain}
\maketitle

\begin{abstract}
    In 1948, W. Hoeffding introduced a large class of unbiased
    estimators called $U$-statistics, defined as the average
    value of a real-valued $k$-variate function $h$ calculated at all
    possible sets of $k$ points from a random sample.
    In the present paper we investigate the corresponding
    extreme value analogue, which we shall
    call $U$-max-statistics. We are concerned with the behavior of the
    largest value of such function $h$ instead of its average. Examples
    of $U$-max-statistics are the diameter or the largest
    scalar product within a random sample. $U$-max-statistics of
    higher degrees are given by triameters and other metric
    invariants.

  \noindent Keywords: random diameter, triameter, spherical distance,
  extreme value, $U$-statistics, Poisson approximation

\end{abstract}

\section{Introduction}
$U$-statistics form a very important class of unbiased estimators
for distributional properties such as moments or Spearman's rank
correlation. A $U$-statistic of degree $k$ with symmetric kernel $h$
is a function of the form
\begin{displaymath}
    U(\xi_1, \dots, \xi_n) = {n \choose k}^{-1} \sum_J h(\xi_{i_1}, \cdots,
    \xi_{i_k}),
\end{displaymath}
where the sum stretches over $J = \{(i_1, \dots, i_k)\!: 1 \leq i_1
<\dots < i_k \leq n\}$, $\xi_1, \dots, \xi_n$ are random elements in
a measurable space $\mathcal{S}$ and $h$ is a real-valued Borel
function on $\mathcal{S}^k$, symmetric in its $k$ arguments. In his
seminal paper, Hoeffding \cite{hoeffding48} defined $U$-statistics
for not necessarily symmetric kernels and for random points in
$d$-dimensional Euclidean space $\R^d$. Later the concept was
extended to arbitrary measurable spaces. Since 1948, most of the
classical asymptotic results for sums of i.i.d.\ random variables
have been formulated in the setting of $U$-statistics, such as
central limit laws, strong laws of large numbers, Berry-Ess\'{e}en
type bounds and laws of the iterated logarithm.

The purpose of this article is to investigate the extreme
value analogue of $U$-statistics, i.e.
\begin{displaymath}
    H_n = \max_J h(\xi_{i_1}, \dots, \xi_{i_k}).
\end{displaymath}
A typical example of such $U$-max-statistic is the diameter of a
sample of points in a metric space, obtained by using the metric as
kernel. Grove and Markvorsen \cite{grove92} introduced an infinite
sequence of metric invariants generalizing the notion of diameter to
``triameter'', ``quadrameter'', etc. on compact metric spaces. Their
$k$-extent is the maximal average distance between $k$ points, which
is an example for a $U$-max-statistic of arbitrary degree $k$. Other
examples are the largest surface area or perimeter of a triangle
formed by point triplets, or the largest scalar product within a
sample of points in $\R^d$.

The key to our results is the observation that for all $z\in \R$,
the $U$-max-statistic $H_n$ does not exceed $z$ if and only if $U_z$
vanishes, where
\begin{displaymath}
    U_z = \sum_J\Ind\{h(\xi_{i_1}, \dots, \xi_{i_k}) > z\}.
\end{displaymath}
The random variable $U_z$ counts the number of exceedances of the
threshold $z$ and is a normalized $U$-statistic in the usual sense.
We approximate its distribution with the help of a Poisson
approximation result for the sum of dissociated random indicator
kernel functions by Barbour et al.\ \cite{bar:hol:jan92}, which
enables us to determine the distribution of $H_n$ up to some known
error. In order to deduce the corresponding limit law for $H_n$, the
behavior of the upper tail of the distribution of $h$ must be known,
which often requires complicated geometric computations. Denote by
$\| \cdot \|$ the Euclidean norm. The general results are used to
derive limit theorems for the following settings: largest interpoint
distance and scalar product of a sample of points in the
$d$-dimensional closed unit ball $\Ball = \{x \in \R^d\!: \| x \|
\leq 1\}$, where the directions of the points have a density on the
surface $\Sphere$ of $\Ball$ and are independent of the norms;
smallest spherical distance of a sample of points with density on
$\Sphere$; largest perimeter of all triangles formed by point
triplets in a sample of uniformly distributed points on the unit
circle $\S$.

\section{Poisson approximation for $U$-max-statistics}

The following result is easily derived from Theorem~2.N for
dissociated indicator random variables from Barbour et al.\
\cite{bar:hol:jan92}. We use the convention that improper sums for
$k=1$ equal zero.

\begin{theorem}\label{th:main}
    Let $\xi_1, \dots, \xi_n$ be i.i.d.\ $\mathcal{S}$-valued random elements
    and \\$h\!: \mathcal S^k \rightarrow \R$ a symmetric Borel
    function. Putting
    \begin{align*}
         p_{n,z} &= \Prob{h(\xi_1, \dots, \xi_k) > z}, \\
        \lambda_{n,z} &= {n \choose  k} p_{n,z}, \\
        \tau_{n,z}(r) &= p_{n,z}^{-1}\Prob{h(\xi_1, \dots, \xi_k) >
        z, h(\xi_{1+k-r}, \xi_{2+k-r}, \dots, \xi_{2k-r}) > z},
    \end{align*}
    we have, for any $n \geq k$ and any $z \in \R$,
    \begin{align}
        |&\Prob{H_n \leq z} - \exp\{-\lambda_{n,z}\}| \label{eq:main} \\
        &\leq (1-\exp\{-\lambda_{n,z}\})
        \left\{p_{n,z}\left[{n \choose k}-{n-k \choose k}\right] + \sum_{r =
        1}^{k-1}{k \choose r}{n-k \choose
        k-r}\tau_{n,z}(r)\right\}.\nonumber
    \end{align}
\end{theorem}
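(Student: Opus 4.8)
\section*{Proof proposal}

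The plan is to read $U_z$ as a sum of dissociated indicator variables and to feed it into the Poisson approximation of Barbour et al. I would index the summands by the $k$-subsets $\alpha \subseteq \{1,\dots,n\}$, setting $X_\alpha = \Ind\{h(\xi_{i_1},\dots,\xi_{i_k}) > z\}$ for $\alpha = \{i_1,\dots,i_k\}$, so that $U_z = \sum_\alpha X_\alpha$. The observation recorded before the theorem gives $\{H_n \leq z\} = \{U_z = 0\}$, hence $\Prob{H_n \leq z} = \Prob{U_z = 0}$, and it remains to compare $\Prob{U_z = 0}$ with $e^{-\lambda_{n,z}}$. Because the $\xi_i$ are i.i.d., each $X_\alpha$ is a function of $\{\xi_i : i \in \alpha\}$ alone; consequently indicators carried by disjoint index sets are independent, so $\{X_\alpha\}$ is dissociated with dependency neighbourhood $N_\alpha = \{\beta : \beta \cap \alpha \neq \emptyset\}$.

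Next I would invoke Theorem~2.N of \cite{bar:hol:jan92}. Writing $p_\alpha = \E X_\alpha = p_{n,z}$ for every $\alpha$ (by symmetry of $h$ and identical distribution of the $\xi_i$) and $\lambda_{n,z} = \sum_\alpha p_\alpha = \binom{n}{k}p_{n,z}$, the theorem provides
\[
  |\Prob{U_z = 0} - e^{-\lambda_{n,z}}| \;\leq\; \frac{1 - e^{-\lambda_{n,z}}}{\lambda_{n,z}}\,(b_1 + b_2),
\]
with
\[
  b_1 = \sum_\alpha \sum_{\beta \in N_\alpha} p_\alpha p_\beta, \qquad b_2 = \sum_\alpha \sum_{\beta \in N_\alpha,\ \beta \neq \alpha} \E[X_\alpha X_\beta].
\]
The third BHJ term vanishes: dissociation makes $X_\alpha$ independent of $\{X_\beta : \beta \cap \alpha = \emptyset\}$, so the conditional expectation defining it reduces to $p_\alpha$ and $b_3 = 0$.

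It then remains to evaluate the two sums combinatorially. A fixed $\alpha$ meets $\binom{n}{k} - \binom{n-k}{k}$ of the $k$-subsets, so $b_1 = \binom{n}{k}\,[\binom{n}{k} - \binom{n-k}{k}]\,p_{n,z}^2$ and $b_1/\lambda_{n,z} = [\binom{n}{k} - \binom{n-k}{k}]\,p_{n,z}$, which is the first term in the braces of \eqref{eq:main}. For $b_2$ I would stratify the ordered pairs $(\alpha,\beta)$ with $\beta \neq \alpha$ by the overlap $r = |\alpha \cap \beta| \in \{1,\dots,k-1\}$: a fixed $\alpha$ has exactly $\binom{k}{r}\binom{n-k}{k-r}$ partners with $|\alpha \cap \beta| = r$, and by symmetry each contributes $\E[X_\alpha X_\beta] = \Prob{h(\xi_1,\dots,\xi_k) > z,\ h(\xi_{1+k-r},\dots,\xi_{2k-r}) > z} = p_{n,z}\,\tau_{n,z}(r)$. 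Summing over $\alpha$ and $r$ and dividing by $\lambda_{n,z}$ gives $b_2/\lambda_{n,z} = \sum_{r=1}^{k-1}\binom{k}{r}\binom{n-k}{k-r}\tau_{n,z}(r)$, the second term; substitution into the displayed bound yields \eqref{eq:main}.

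The routine part is the counting, but I expect the genuine care to lie in two places: verifying the dissociation hypothesis and the vanishing of $b_3$ so that Theorem~2.N applies with the sharp factor $(1-e^{-\lambda_{n,z}})/\lambda_{n,z}$, and pinning down a canonical representative of each overlap class so that the joint exceedance probability collapses to the single quantity $\tau_{n,z}(r)$ with precisely the index pattern $\{1+k-r,\dots,2k-r\}$ appearing in the statement. The convention $k=1$ (empty $b_2$, no overlaps) should be checked to match the stated improper-sum rule.
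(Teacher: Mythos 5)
Your proposal is correct and follows exactly the route the paper intends: the paper offers no written proof beyond the remark that the theorem "is easily derived from Theorem~2.N" of Barbour, Holst and Janson, and your argument — identifying $U_z$ as a sum of dissociated indicators indexed by $k$-subsets, noting $\{H_n\le z\}=\{U_z=0\}$, checking that $b_3=0$, and evaluating $b_1/\lambda_{n,z}$ and $b_2/\lambda_{n,z}$ by the overlap-size stratification with counts $\binom{k}{r}\binom{n-k}{k-r}$ — is precisely that derivation, with the combinatorics and the canonical index pattern $\{1+k-r,\dots,2k-r\}$ handled correctly.
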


Clearly the result can be reformulated as well for the minimum value
of the kernel by replacing $h$ with $-h$. One of the main
applications of this theorem consists in determining a suitable
sequence of transformations \\$z_n\!: T \rightarrow \R$ with
$T\subset \R$, such that both the right hand side of (\ref{eq:main})
converges to zero as $n \ti$ for all $z=z_n(t)$, $t \in T$, and the
limits of $\exp\{-\lambda_{n,z_n(t)}\}$ are non-trivial for all $t
\in T$. The usual choice is $T =[0,\infty)$. One way to achieve this
goal is based on the following two remarks and will eventually lead
to the well known Poisson limit theorem of Silverman and Brown
\cite{sil:bro78}, originally proved by a suitable coupling.

\begin{remark}\label{re:order of tau}
    As already Silverman and Brown \cite{sil:bro78} stated,
    \begin{displaymath}
        p_{n,z} \leq \tau_{n,z}(1) \leq \dots \leq \tau_{n,z}(k) = 1.
    \end{displaymath}
\end{remark}

\begin{remark}\label{re:order of error}
    If the sample size $n$ tends to infinity, then the error
    (\ref{eq:main}) is asymptotically
    \begin{displaymath}
         \O\big(p_{n,z}n^{k-1}+\sum_{r =
         1}^{k-1}\tau_{n,z}(r) n^{k-r}\big)
    \end{displaymath}
    and for $k>1$ the sum is dominating, see \cite[p. 35]{bar:hol:jan92}.
\end{remark}

\begin{remark}
    The symmetry condition on $h$ can be avoided if $h$ is
    symmetrized by
    \begin{displaymath}
        h^*(x_1, \dots, x_k) = \max_{j_1, \dots, j_k}h(x_{j_1}, \dots, x_{j_k}),
    \end{displaymath}
    where the maximum is taken over all permutations of $1, \dots,
    k$.
\end{remark}

The conditions stated in \cite{sil:bro78} suffice to ensure that
Theorem~\ref{th:main} provides a non-trivial Weibull limit law.

\begin{corollary}[Silverman-Brown limit law \cite{sil:bro78}]~\label{co:sb}
    In the setting of Theorem~\ref{th:main}, if for some sequence of transformations
    $z_n\!: T \to \R$ with $T \subset \R$, the conditions
    \begin{equation}\label{eq:lambda}
         \lim_{n \ti}\lambda_{n,z_n(t)} = \lambda_t > 0
    \end{equation}
    and
    \begin{equation}\label{eq:correlation}
         \lim_{n\ti}n^{2k-1}p_{n,z_n(t)}\tau_{n,z_n(t)}(k-1) = 0
    \end{equation}
    hold for all $t \in T$, then
    \begin{equation}\label{eq:limit}
        \lim_{n\ti} \Prob{H_n \leq z_n(t)} = \exp\{-\lambda_t\}
    \end{equation}
    for all $t\in T$.
\end{corollary}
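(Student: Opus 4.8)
The plan is to take the exact inequality from Theorem~\ref{th:main} and show that under the two stated hypotheses, the right-hand side of (\ref{eq:main}) vanishes as $n\to\infty$ for each fixed $t\in T$. Since condition (\ref{eq:lambda}) gives $\lambda_{n,z_n(t)}\to\lambda_t>0$, the factor $(1-\exp\{-\lambda_{n,z}\})$ converges to the constant $1-\exp\{-\lambda_t\}\in(0,1)$ and is therefore bounded; the whole game is to show that the bracketed error term tends to zero. Once that is done, the left-hand side $|\Prob{H_n\le z_n(t)}-\exp\{-\lambda_{n,z_n(t)}\}|$ is squeezed to zero, and combining this with $\exp\{-\lambda_{n,z_n(t)}\}\to\exp\{-\lambda_t\}$ (continuity of the exponential applied to (\ref{eq:lambda})) yields (\ref{eq:limit}) by the triangle inequality.

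For the error term itself I would invoke Remark~\ref{re:order of error}, which tells us the bracket is of order $\O\big(p_{n,z}n^{k-1}+\sum_{r=1}^{k-1}\tau_{n,z}(r)\,n^{k-r}\big)$. The first summand, $p_{n,z}n^{k-1}$, is handled directly by (\ref{eq:lambda}): since $\lambda_{n,z}={n\choose k}p_{n,z}$ converges, we have $p_{n,z}=\O(n^{-k})$, whence $p_{n,z}n^{k-1}=\O(n^{-1})\to 0$. The substantive part is the sum $\sum_{r=1}^{k-1}\tau_{n,z}(r)\,n^{k-r}$. Here I would use the monotonicity chain of Remark~\ref{re:order of tau}, namely $\tau_{n,z}(1)\le\dots\le\tau_{n,z}(k-1)$, so that each $\tau_{n,z}(r)$ in the sum is dominated by $\tau_{n,z}(k-1)$. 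Because the exponents $n^{k-r}$ are largest when $r$ is smallest, a little care is needed: the term with the biggest power of $n$ (namely $r=1$, giving $n^{k-1}$) pairs with the smallest $\tau$, while the largest $\tau$ (at $r=k-1$) carries only $n^{1}$.

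The key algebraic observation bridging these is that condition (\ref{eq:correlation}), $n^{2k-1}p_{n,z}\tau_{n,z}(k-1)\to 0$, controls the whole sum once combined with $p_{n,z}=\O(n^{-k})$. Indeed, for each $r\in\{1,\dots,k-1\}$ I would bound $\tau_{n,z}(r)\,n^{k-r}\le\tau_{n,z}(k-1)\,n^{k-1}$, and then write $\tau_{n,z}(k-1)\,n^{k-1}=\big(n^{2k-1}p_{n,z}\tau_{n,z}(k-1)\big)\big/\big(n^{k}p_{n,z}\big)$. The numerator tends to zero by (\ref{eq:correlation}), while the denominator $n^{k}p_{n,z}=k!\,\lambda_{n,z}/(1+o(1))$ converges to the strictly positive constant $k!\,\lambda_t$, so the ratio vanishes; summing over the finitely many $r$ preserves the limit. \textbf{The main obstacle} I anticipate is precisely this bookkeeping: verifying that the single tail-correlation condition (\ref{eq:correlation}) at the top index $k-1$ really dominates every lower-index term, which rests on correctly pairing the monotonicity of $\tau$ against the decreasing powers of $n$ and on converting $p_{n,z}$-orders via (\ref{eq:lambda}). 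For $k=1$ the sum is empty by the stated convention and only the $p_{n,z}n^{k-1}=p_{n,z}$ term survives, which the same argument kills, so the result holds trivially in that degenerate case as well.
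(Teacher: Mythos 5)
Your proposal is correct and takes essentially the same route as the paper, which states the corollary without a separate formal proof but assembles exactly your argument in Remarks~\ref{re:order of tau}, \ref{re:order of error} and \ref{re:rate of convergence}: the error bound of Theorem~\ref{th:main} is $\O\big(p_{n,z}n^{k-1}+\sum_{r=1}^{k-1}\tau_{n,z}(r)n^{k-r}\big)$, condition (\ref{eq:lambda}) gives $p_{n,z_n(t)}=\O(n^{-k})$ so that each summand is of order $n^{2k-r}p_{n,z}\tau_{n,z}(r)$, and the monotonicity of $\tau$ reduces everything to the single condition (\ref{eq:correlation}). Your bookkeeping of the division by $n^kp_{n,z}\to k!\,\lambda_t>0$ and the treatment of the $k=1$ case are both sound.
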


\begin{remark}\label{re:rate of convergence}
    Condition (\ref{eq:lambda}) implies $p_{n,z_n(t)}=\O(n^{-k})$
    and by Remarks~\ref{re:order of tau} and \ref{re:order of error} we obtain for (\ref{eq:limit})
    the rate of convergence
    \begin{displaymath}
        \O\big(n^{-1}+\sum_{r = 1}^{k-1}n^{2k-r}p_{n, z_n(t)}\tau_{n,
        z_n(t)}(r)\big)
    \end{displaymath}
    with upper bound
    \begin{equation}\label{eq:rate of convergence}
        \O(n^{2k-1}p_{n,z_n(t)}\tau_{n,z_n(t)}(k-1)).
    \end{equation}
    If $k > 2$, it is sometimes useful to replace (\ref{eq:correlation}) by
    the weaker requirement
    \begin{equation}\label{eq:better condition}
        \lim_{n\ti} n^{2k-r}p_{n,z_n(t)}\tau_{n,z_n(t)}(r)  = 0
    \end{equation}
    for each $r \in \{1, \dots, k-1\}$, a fact that follows
    immediately from Theorem~\ref{th:main} and Remark~\ref{re:order of
    error}.
\end{remark}

Appel and Russo \cite{app:rus06} obtained a Weibull limit law
similar to Corollary~\ref{co:sb} for bivariate $h$. They assume that
the upper tail of the distribution of $h(\xi_1, x)$ does not depend
on $x$ for almost all $x\in \mathcal{S}$, which implies that
(\ref{eq:lambda}) and (\ref{eq:correlation}) hold. However, this
condition is fulfilled only in very rare settings, e.g.\ for
uniformly distributed points on $\Sphere$.

\section{Largest interpoint distance} The asymptotic behavior of
the range of a univariate sample can be determined by classical
extreme value theory, see e.g.\ \cite[Sec.\ 2.9]{gal78}. The largest
interpoint distance
\begin{displaymath}
    H_n = \max_{1 \leq i < j \leq n}\|\xi_i - \xi_j\|
\end{displaymath}
within a sample of points in $\R^d$ is a natural and consistent
generalization of the range to spatial data. Matthews and Rukhin
\cite{mat:ruk93} derived its limiting behavior for a normal sample,
a work which has been generalized by Henze and Klein
\cite{hen:klein96} to a sample of points with symmetric Kotz
distribution. Appel et al.\ \cite{app:naj:rus02} found corresponding
limit laws in the setting of uniformly distributed points in
2-dimensional compact sets, which are not too smooth near the
endpoints of their largest axes. They also provided bounds for the
limit law of the diameter of uniformly distributed points in
ellipses and the unit disk. The exact limit distribution for the
disk and in more general settings was found independently by Lao
\cite{lao06} and Mayer and Molchanov \cite{mayer06}. Lao
\cite{lao06} used Theorem~A of \cite{sil:bro78} to obtain the exact
limit law for the diameter of a uniform sample in $\Ball$. The
results in \cite{mayer06} rely on a combination of geometric
considerations and blocking techniques and yield e.g.\ the special
case of Theorem~\ref{th:metric} for spherically symmetric
distributions.

In what follows, we denote by $\langle \cdot, \cdot \rangle$ the
scalar product, by $\mu_{d-1}$ the $(d-1)$-dimensional Hausdorff
measure and by $\Gamma$ and $\Beta$ the complete Gamma and Beta
functions.

\begin{theorem}
  \label{th:metric}
  Let $\xi_1, \xi_2, \dots$ be i.i.d.\ points in $\Ball$, $d\geq 2$, such
  that\\
  $\xi_i=\|\xi_i\| U_i$, $i \geq 1$, where $U_i$ and $\|\xi_i\|$ are
  independent and $U_i \in \Sphere$. Assume that the
  distribution function $F$ of $1-\|\xi_1\|$ satisfies
  \begin{displaymath}
        \lim_{s \downarrow 0} s^{-\alpha}F(s)=a \in (0, \infty)
  \end{displaymath}
  for some $\alpha \geq 0$. Further assume that $U_1$ has a density $f$
  with respect to $\mu_{d-1}$ and that $\int_\Sphere f(x)f(-x) \mu_{d-1}(dx) \in (0, \infty)$.
  Then
  \begin{displaymath}
    \lim_{n\ti} \Prob{n^{2/\gamma}(2-H_n) \leq t}
    =1-\exp\left\{-\frac{\sigma_1}{2}t^\gamma\right\}
  \end{displaymath}
  for $t>0$, where
  \begin{displaymath}
        \gamma=(d-1)/2+2\alpha
  \end{displaymath}
  and
  \begin{displaymath}
        \sigma_1=\frac{(4\pi)^\frac{d-1}{2}a^2\Gamma^2(\alpha+1)}{\Gamma(\frac{d+1}{2}+2\alpha)}\int_\Sphere
        f(x)f(-x)\mu_{d-1}(dx).
  \end{displaymath}
  The rate of convergence for $t<\infty$ is $\O(n^{-\frac{d-1}{d-1+4\alpha}})$.
\end{theorem}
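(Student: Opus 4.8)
The plan is to apply Corollary~\ref{co:sb} with degree $k=2$ and symmetric kernel $h(x,y)=\|x-y\|$. Since the supremum of $h$ over $\Ball\times\Ball$ equals the diameter $2$, I choose the transformation $z_n(t)=2-tn^{-2/\gamma}$ and abbreviate $\delta=2-z$ for the small distance of the threshold from $2$. Because $U_1$ has a density, the pairwise distances, and hence $H_n$, have continuous distributions, so
\[
\Prob{n^{2/\gamma}(2-H_n)\le t}=1-\Prob{H_n\le z_n(t)}.
\]
It therefore suffices to verify the two hypotheses (\ref{eq:lambda}) and (\ref{eq:correlation}) of Corollary~\ref{co:sb} and to identify the limit $\lambda_t=\tfrac{\sigma_1}{2}t^\gamma$. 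Everything reduces to the asymptotics, as $\delta\downarrow0$, of the two-point probability $p_{n,z}=\Prob{\|\xi_1-\xi_2\|>z}$ and of the three-point probability $\Prob{\|\xi_1-\xi_2\|>z,\ \|\xi_2-\xi_3\|>z}$, since $p_{n,z}\tau_{n,z}(1)$ is exactly the latter.

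The geometric heart of the argument is that $\|\xi_1-\xi_2\|$ is close to $2$ only when both points lie near $\Sphere$ and are nearly antipodal. Writing $\xi_i=(1-s_i)U_i$ with depth $s_i=1-\|\xi_i\|$ and letting $\phi$ be the angle between $U_2$ and $-U_1$, a Taylor expansion of
\[
\|\xi_1-\xi_2\|^2=(1-s_1)^2+(1-s_2)^2+2(1-s_1)(1-s_2)\cos\phi
\]
gives $2-\|\xi_1-\xi_2\|=s_1+s_2+\tfrac14\phi^2+o(s_1+s_2+\phi^2)$, so that the event $\{\|\xi_1-\xi_2\|>z\}$ is, to leading order, $\{s_1+s_2+\tfrac14\phi^2<\delta\}$. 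I then compute $p_{n,z}$ by integrating over the depths against $dF$ and over the directions against $f$. Introducing geodesic polar coordinates on $\Sphere$ around $-U_1$, the surface element near the antipode is $\omega_{d-2}\,\phi^{d-2}\,d\phi\,(1+o(1))$, where $\omega_{d-2}$ is the area of $\mathbb{S}^{d-2}$; since the relevant cap shrinks, the direction integral localizes at $-U_1$ and factors out $\int_\Sphere f(x)f(-x)\,\mu_{d-1}(dx)$. Substituting $s_i=\delta v_i$ and using the regular variation $F(s)\sim a s^\alpha$ (so $dF(s)\approx a\alpha s^{\alpha-1}ds$) reduces the remainder to the Dirichlet integral
\[
\int_{v_1+v_2<1}(1-v_1-v_2)^{(d-1)/2}v_1^{\alpha-1}v_2^{\alpha-1}\,dv_1\,dv_2=\frac{\Gamma(\alpha)^2\,\Gamma(\tfrac{d+1}{2})}{\Gamma(\tfrac{d+1}{2}+2\alpha)}.
\]
Collecting constants (using $(4\pi)^{(d-1)/2}=2^{d-1}\pi^{(d-1)/2}$ and $\alpha\Gamma(\alpha)=\Gamma(\alpha+1)$) yields $p_{n,z}\sim\sigma_1\,\delta^{\gamma}$ with $\gamma=(d-1)/2+2\alpha$ and $\sigma_1$ precisely as stated; the boundary case $\alpha=0$ corresponds to an atom of $\|\xi_1\|$ on $\Sphere$ and is obtained by replacing the degenerate radial integrals by evaluation at $s_i=0$. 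With $\delta=tn^{-2/\gamma}$ this gives $\lambda_{n,z_n(t)}=\binom{n}{2}p_{n,z_n(t)}\to\tfrac{\sigma_1}{2}t^\gamma$, which is (\ref{eq:lambda}).

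For (\ref{eq:correlation}) I estimate the three-point probability. Both distances can exceed $z$ only if $U_1$ and $U_3$ lie near $-U_2$, so conditioning on $U_2$ and repeating the localization with two independent shrinking caps (angles $\phi_1,\phi_3$) and three depths produces an order $\delta^{3\alpha+(d-1)}$. Hence $n^3p_{n,z_n(t)}\tau_{n,z_n(t)}(1)=n^3\,\Prob{\|\xi_1-\xi_2\|>z,\ \|\xi_2-\xi_3\|>z}$ is of order $n^{3}\delta^{3\alpha+d-1}=n^{3-2(3\alpha+d-1)/\gamma}$, and the exponent simplifies to $-(d-1)/(d-1+4\alpha)<0$ for $d\ge2$. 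This verifies (\ref{eq:correlation}), and by (\ref{eq:rate of convergence}) it simultaneously delivers the stated rate $\O(n^{-(d-1)/(d-1+4\alpha)})$.

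The main obstacle is the rigorous justification of the two localizations, since $f$ is only assumed to be a density with respect to $\mu_{d-1}$ and need not be continuous: replacing $f(U_2)$ by $f(-U_1)$ inside the shrinking cap must be carried out by an approximate-identity (Lebesgue-point) argument valid for $\mu_{d-1}$-almost every direction, after which the outer integral is passed to the limit by dominated convergence using the integrability of $x\mapsto f(x)f(-x)$. Controlling the $o(\cdot)$ terms in the geometric expansion uniformly over the shrinking region, so that they do not corrupt the Dirichlet-integral limit, is the remaining technical point; the radial regular variation together with the finiteness of $\int_\Sphere f(x)f(-x)\,\mu_{d-1}(dx)$ is exactly what makes both the dominated-convergence step and the error estimates go through.
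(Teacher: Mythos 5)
Your proposal is correct and follows essentially the same route as the paper: the transformation $z_n(t)=2-tn^{-2/\gamma}$ fed into Corollary~\ref{co:sb}, the tail asymptotics of $\Prob{\|\xi_1-\xi_2\|\geq 2-s}$ obtained by the expansion $2-\|\xi_1-\xi_2\|\approx \eta_1+\eta_2+\tfrac14\beta^2$, localization of the direction density at the antipode via a Lebesgue-point argument, and the Dirichlet integral, followed by the bound of order $\delta^{d-1+3\alpha}$ on the three-point probability to verify (\ref{eq:correlation}) and extract the rate from (\ref{eq:rate of convergence}). This is precisely the content of Lemma~\ref{le:metric} and the paper's proof of Theorem~\ref{th:metric}.
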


\begin{remark}
    Spherically symmetric distributed points have independent
    and uniformly distributed directions and hence \cite[Th.\ 4.2]{mayer06}
    follows immediately from Theorem~\ref{th:metric}
    with
    \begin{displaymath}
        \int_\Sphere f(x)f(-x)\mu_{d-1}(dx) = \frac{\Gamma(\frac{d}{2})}{2\pi^{d/2}}.
    \end{displaymath}
    The special case $\alpha = 1$ and $a=d$ yields the limit law for
    the diameter of a sample of uniformly distributed points in
    $\Ball$, see \cite{lao06} or \cite{mayer06}.
\end{remark}

\begin{remark}
    If $\|\xi_i\| = 1$ almost surely, then $\alpha=0$ and
    $a=1$. For instance, if $U_i$ are uniformly distributed on $\Sphere$,
    then for $t>0$
    \begin{displaymath}
        \lim_{n\ti}\Prob{n^{4/(d-1)}(2-H_n) \leq t}
        = 1-\exp\left\{-\frac{2^{d-3}\Gamma(\frac{d}{2})}{\pi^\thf\Gamma(\frac{d+1}{2})}t^\frac{d-1}{2}\right\},
    \end{displaymath}
    see \cite{app:rus06} or \cite{mayer06}.
    Another example appears if $U_i$ has the von Mises-Fisher distribution of
    dimension $d\geq 2$ with density
    \begin{displaymath}
            f_F(x) = C_d(\kappa)\exp\left\{\kappa \langle \mu, x \rangle
            \right\}
    \end{displaymath}
    for $x \in \Sphere$, where $\mu \in \Sphere$ represents the mean
    direction and $\kappa > 0$ is the concentration parameter.
    The normalizing constant $C_d(\kappa)$ is given by
    \begin{displaymath}
            C_d(\kappa) = \frac{\kappa^{d/2-1}}{(2\pi)^{d/2}I_{d/2-1}(\kappa)},
    \end{displaymath}
    where $I_\nu$ denotes the modified Bessel function of the first kind
    of order $\nu$.
    With
    \begin{displaymath}
        \int_\Sphere f_F(x)f_F(-x)\mu_{d-1}(dx) =
        C_d^2(\kappa)\frac{2\pi^{d/2}}{\Gamma(\frac{d}{2})}
    \end{displaymath}
    the corresponding limit law follows immediately.
\end{remark}

A key part of the proof of Theorem~\ref{th:metric} is the asymptotic
tail behavior of the distribution of the distance between two
i.i.d.\ points.

\begin{lemma}\label{le:metric}
    If the conditions of Theorem~\ref{th:metric} hold,
    then
    \begin{displaymath}
        \lim_{s\downarrow 0}s^{-\gamma}\Prob{\|\xi_1-\xi_2\| \geq
        2-s} = \sigma_1.
    \end{displaymath}
\end{lemma}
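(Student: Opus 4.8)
The plan is to localize the event $\{\|\xi_1-\xi_2\|\ge 2-s\}$ to the regime where both points lie near the boundary and point in nearly antipodal directions, and then to separate the radial and angular contributions. Writing $R_i=\|\xi_i\|=1-s_i$ and letting $\theta\in[0,\pi]$ be the angle between $U_1$ and $-U_2$ (so $\langle U_1,U_2\rangle=-\cos\theta$), the exact identity
\begin{displaymath}
   \|\xi_1-\xi_2\|^2=(R_1+R_2)^2-4(1-s_1)(1-s_2)\sin^2(\theta/2)
\end{displaymath}
shows that $\|\xi_1-\xi_2\|\ge 2-s$ is equivalent to
\begin{displaymath}
   4(1-s_1)(1-s_2)\sin^2(\theta/2)\le 4(s-s_1-s_2)+(s_1+s_2)^2-s^2 .
\end{displaymath}
For small $s$ this forces $s_1,s_2=\O(s)$ and $\theta=\O(\sqrt s)$; on this scale $\sin^2(\theta/2)\sim\theta^2/4$, the factor $(1-s_1)(1-s_2)\to 1$, and the quadratic corrections are negligible, so the event is, up to an asymptotically negligible perturbation of the region, $\{s_1+s_2+\theta^2/4\le s\}$. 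I would make this rigorous by sandwiching the true region between two sets $\{s_1+s_2+\theta^2/4\le s(1\pm\eps)\}$ and letting $\eps\downarrow 0$ after the limit in $s$.

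Next I would compute the radial probability. Since $s_1,s_2$ are i.i.d.\ with distribution function $F$ satisfying $F(s)\sim a s^\alpha$, a standard Abelian (Karamata-type) argument—rescaling the Stieltjes measure via $w^{-\alpha}\,dF(w\,\cdot)\to a\alpha v^{\alpha-1}\,dv$ and passing to the limit in $\Prob{s_1+s_2\le w}=\int_{[0,w]}F(w-u)\,dF(u)$—yields
\begin{displaymath}
   \Prob{s_1+s_2\le w}\sim a^2\,\alpha\,\Beta(\alpha,\alpha+1)\,w^{2\alpha}
   =a^2\,\frac{\Gamma^2(\alpha+1)}{\Gamma(2\alpha+1)}\,w^{2\alpha}\as w\downarrow 0,
\end{displaymath}
with the convention that this reads $1$ when $\alpha=0$ (where $s_1=s_2=0$ a.s.). I would then apply this with $w=s-\theta^2/4$ inside the angular integral, using polar coordinates on $\Sphere$ about $-x$, where $\mu_{d-1}(dy)=\sin^{d-2}\theta\,d\theta\,d\Omega_{d-2}$ with $\sin^{d-2}\theta\sim\theta^{d-2}$ and total equatorial mass $\omega_{d-2}=2\pi^{(d-1)/2}/\Gamma(\tfrac{d-1}{2})$; the substitution $\theta=2\sqrt s\,v$ turns the $\theta$-integral into a Beta integral and produces the exponent $s^{\gamma}$ with $\gamma=(d-1)/2+2\alpha$.

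Assembling the pieces, after replacing $f(y)$ by $f(-x)$ on the shrinking cap and carrying out the angular integral, I expect
\begin{displaymath}
   s^{-\gamma}\Prob{\|\xi_1-\xi_2\|\ge 2-s}\longrightarrow
   a^2\frac{\Gamma^2(\alpha+1)}{\Gamma(2\alpha+1)}\cdot\frac{2\pi^{(d-1)/2}}{\Gamma(\tfrac{d-1}{2})}\cdot 2^{d-2}\,\Beta\!\left(\tfrac{d-1}{2},2\alpha+1\right)\int_\Sphere f(x)f(-x)\,\mu_{d-1}(dx).
\end{displaymath}
Simplifying the Beta/Gamma factors through $\Beta(\tfrac{d-1}{2},2\alpha+1)=\Gamma(\tfrac{d-1}{2})\Gamma(2\alpha+1)/\Gamma(\tfrac{d+1}{2}+2\alpha)$ and $2^{d-1}\pi^{(d-1)/2}=(4\pi)^{(d-1)/2}$ then collapses the constant exactly to $\sigma_1$.

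The step I expect to be the main obstacle is the rigorous replacement of $f(y)$ by $f(-x)$, since $f$ is only assumed to be a density with $\int_\Sphere f(x)f(-x)\,\mu_{d-1}(dx)\in(0,\infty)$ and need not be continuous. I would view the inner integral $s^{-\gamma}\int_\Sphere f(y)K_s(\theta(x,y))\,\mu_{d-1}(dy)$, with $K_s$ the radial kernel supported on $\theta\le 2\sqrt s$, as an approximate identity concentrating at $-x$: by Lebesgue's differentiation theorem it converges to $f(-x)$ times the limiting angular constant at every Lebesgue point $-x$ of $f$, hence a.e.; I would then pass to the limit under the outer integral $\int_\Sphere f(x)(\cdots)\,\mu_{d-1}(dx)$ by dominated convergence, bounding the spherical averages by a maximal function and truncating $f$ to secure integrability against $f(x)$. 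The remaining quantitative points—controlling the region-sandwiching error and the uniformity of $F(s)\sim a s^\alpha$ over the cap—are routine but are precisely what one must track carefully, as they also feed the rate of convergence asserted in Theorem~\ref{th:metric}.
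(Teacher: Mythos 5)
Your proposal is correct and follows essentially the same route as the paper: both reduce the event via the cosine theorem to $\{\eta_1+\eta_2+\beta^2/4\le\tilde s\}$ with $|\tilde s-s|=\O(s^2)$, both invoke Lebesgue's differentiation theorem to replace the density by $f(-x)$ on a shrinking cap, and both evaluate the resulting Beta/Dirichlet integral, your constant collapsing correctly to $\sigma_1$. The only difference is the order of integration (you compute the radial convolution $\Prob{\eta_1+\eta_2\le w}\sim a^2\Gamma^2(\alpha+1)w^{2\alpha}/\Gamma(2\alpha+1)$ first and then the angular integral, while the paper integrates the angular part first and applies Dirichlet's formula to the radial double integral), which is immaterial; if anything you are more explicit than the paper about justifying the interchange of the a.e.\ limit with the outer integral over $x$.
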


\begin{proof}
    Let $\eta_1$ and $\eta_2$ be independent random variables with distribution
    $F$ and denote by $\beta_x$ the smaller central angle between
    $U_2$ and $x\in \Sphere$. The cosine theorem yields
    \begin{align*}
        \Prob{\|\xi_1-\xi_2\| \geq 2-s} &= \Prob{\|\xi_1\|^2+\|\xi_2\|^2 + 2\|\xi_1\|\|\xi_2\|\cos
        \beta_{-U_1} \geq(2-s)^2} \\
        &=\Prob{\cos \beta_{-U_1} \geq
        \frac{(2-s)^2-(1-\eta_1)^2-(1-\eta_2)^2}{2(1-\eta_1)(1-\eta_2)}}
    \end{align*}
    and by expansion of $\cos\beta_{-U_1}$ about 0 we obtain for sufficiently small $s$
    \begin{equation}\label{eq:metric expansion}
        \Prob{\|\xi_1-\xi_2\| \geq 2-s} = \Prob{|\beta_{-U_1} | \leq
        2(\st-\eta_1-\eta_2)^\thf, \eta_1+\eta_2 \leq \st},
    \end{equation}
    where $|\st - s| \leq C_1 s^2$ for some finite $C_1$, thus
    $\st/s \to 1$ as $s \downarrow 0$. Lebesgue's differentiation
    theorem (see e.g.\ \cite[Th.\ 2.9.5]{federer69}) implies that
    \begin{equation}\label{eq:metric continuity}
        \lim_{s\downarrow 0}\frac{\Prob{|\beta_{-x}|\leq
        2(\st-y)^\thf}}{(4(\st-y))^{\frac{d-1}{2}}}
        = \mu_{d-1}(\mathbb{B}^{d-1})f(-x)=
        \frac{\pi^\frac{d-1}{2}}{\Gamma(\frac{d+1}{2})}f(-x)
    \end{equation}
    for $\mu_{d-1}$-almost every $x\in \Sphere$ and any $y \in [0, \st]$.
    Integration over all $x \in \Sphere$ with respect to $f$ yields
    \begin{displaymath}
        \lim_{s\downarrow 0} (\st-y)^{-\frac{d-1}{2}} \Prob{|\beta_{-U_1}|\leq
        2(\st-y)^\thf} = c,
    \end{displaymath}
    where
    \begin{displaymath}
        c=\frac{(4\pi)^\frac{d-1}{2}}{\Gamma(\frac{d+1}{2})}\int_\Sphere
        f(x)f(-x)\mu_{d-1}(dx),
    \end{displaymath}
    and hence with (\ref{eq:metric expansion})
    \begin{displaymath}
        \lim_{s\downarrow 0}\frac{\Prob{\|\xi_1-\xi_2\| \geq 2-s}}
        {\E\left((\st-\eta_1-\eta_2)^\frac{d-1}{2}\Ind\{\eta_1+\eta_2\leq \st\}\right)} = c.
    \end{displaymath}
    If $\alpha=0$, then $\Prob{\eta_i=0}=a$, $i=1,2$, and thus
    \begin{displaymath}
        \lim_{s \downarrow 0} \st^{-\gamma} \Prob{\|\xi_1-\xi_2\| \geq
        2-s} = c a^2 = \sigma_1.
    \end{displaymath}
    If $\alpha > 0$,
    \begin{displaymath}
        \lim_{s\downarrow 0}\frac{\Prob{\|\xi_1-\xi_2\| \geq 2-s}}
        {\int_0^{\st}\int_0^{\st-y_1} (\st - y_1 - y_2)^\frac{d-1}{2} d F(y_2) d
        F(y_1)}=c
    \end{displaymath}
    and substituting $v_i = y_i/\st$, $i = 1, 2$, yields
    \begin{displaymath}
        \lim_{s\downarrow 0}\frac{\Prob{\|\xi_1-\xi_2\|
        \geq2-s}}{\st^\gamma}
        =ca^2\alpha^2\int_0^1\int_0^{1-v_1}
        (1-v_1-v_2)^\frac{d-1}{2}(v_1v_2)^{\alpha-1}dv_2dv_1.
    \end{displaymath}
    By Dirichlet's Formula, the double integral equals
    \begin{displaymath}
        \frac{\Gamma^2(\alpha)\Gamma(\frac{d+1}{2})}{\Gamma(\frac{d+1}{2}
        +2\alpha)}
    \end{displaymath}
    and the proof is complete.
\end{proof}

\begin{proof}[Proof of Theorem~\ref{th:metric}]
    Plugging the transformation $z_n(t) = 2-tn^{-2/\gamma}$, $t > 0$
    into Corollary~\ref{co:sb} and using the tail probabilities given
    in Lemma~\ref{le:metric}, we find
    \begin{displaymath}
        \lim_{n \ti}{n \choose 2}\Prob{\|\xi_1 - \xi_2\| > z_n(t)} =
        \frac{\sigma_1}{2}t^\gamma, \qquad t > 0.
    \end{displaymath}
    Hence condition (\ref{eq:lambda}) holds for all $t>0$.
    The more extensive part of the proof aims to show that
    (\ref{eq:correlation}) holds. Let $\beta_x$ and $\beta'_x$ be the smaller central
    angles between $U_2$ and $x\in \Sphere$ and between $U_3$ and $x\in \Sphere$. Further let
    $\eta_1$, $\eta_2$ and $\eta_3$ be independent random variables with distribution
    $F$. Put $s_n=tn^{-2/\gamma}$. Following the proof of Lemma~\ref{le:metric}
    \begin{align}
        &\Prob{\|\xi_1-\xi_2\| > z_n(t), \|\xi_1-\xi_3\| > z_n(t)} \nonumber\\
        &\leq \Prob{|\beta_{-U_1}| \leq 2s_n^\thf, |\beta'_{-U_1}| \leq 2 s_n^\thf, \eta_i \leq
            s_n, i = 1, 2, 3}\nonumber\\
        &= \E\left(\int_\Sphere\P\big\{|\beta_{-x}|\leq 2 s_n^\thf\big\}^2 f(x) \mu_{d-1}(dx)\Ind\{\eta_i \leq
        s_n, i = 1, 2, 3\}\right)\nonumber\\
        & \leq C\E(s_n^{d-1}\Ind\{\eta_i \leq
        s_n, i = 1, 2, 3\}),\label{eq:metric bounds}
    \end{align}
    where the last step follows from (\ref{eq:metric continuity})
    and $C$ is a suitable finite positive constant.
    If $\alpha = 0$, then $\Prob{\eta_i=0}=a$, $i = 1, 2, 3$, and we
    obtain
    \begin{align*}
        \lim_{n\ti} n^3 &\Prob{\|\xi_1-\xi_2\| > z_n(t), \|\xi_1-\xi_3\| >
        z_n(t)} \\
        &\leq C a^3\lim_{n\ti}n^3 s_n^{d-1} = C a^3 t^{d-1}\lim_{n\ti}n^{-1}=0.
    \end{align*}
    If $\alpha>0$, we derive from (\ref{eq:metric bounds}) that
    \begin{displaymath}
        \lim_{n\ti} \frac{\Prob{\|\xi_1-\xi_2\| > z_n(t), \|\xi_1-\xi_3\| >
        z_n(t)}}{C \int_0^{s_n} \int_0^{s_n} \int_0^{s_n} s_n^{d-1} d F(y_3) d
        F(y_2) d F(y_1)} \leq 1
    \end{displaymath}
    and substituting $v_i = y_i/s_n$, $i = 1, 2, 3$, yields
    \begin{align*}
        \lim_{n\ti} n^3 &\Prob{\|\xi_1-\xi_2\| > z_n(t), \|\xi_1-\xi_3\| > z_n(t)} \\
        &\leq C a^3 \lim_{n\ti}s_n^{d-1+3\alpha} =
        C a^3 t^{d-1+3\alpha}\lim_{n\ti}n^{-\frac{d-1}{d-1+4\alpha}} =
        0.
    \end{align*}
    The rate of convergence is determined via (\ref{eq:rate of convergence}).
\end{proof}

\section{Largest scalar product} Besides the Euclidean metric, the
scalar product is another symmetric kernel on $\R^d\times \R^d$. The
behavior of its largest value
\begin{displaymath}
    H_n = \max_{1 \leq i < j \leq n}\langle \xi_i, \xi_j\rangle
\end{displaymath}
within a sample of points in $\Ball$ is determined in the next
result.

\begin{theorem}\label{th:scalar product}
  Let $\xi_1, \xi_2, \dots$ be i.i.d.\ points in $\Ball$, $d\geq 2$, such
  that\\
  $\xi_i=\|\xi_i\| U_i$, $i \geq 1$, where $U_i$ and $\|\xi_i\|$ are
  independent and $U_i \in \Sphere$. Assume that the
  distribution function $F$ of $1-\|\xi_1\|$ satisfies
  \begin{displaymath}
        \lim_{s \downarrow 0} s^{-\alpha}F(s)=a \in (0, \infty)
  \end{displaymath}
  for some $\alpha \geq 0$. Further assume that $U_1$ has a
  square-integrable density $f$ on $\Sphere$ with respect to $\mu_{d-1}$. Then
  \begin{displaymath}
    \lim_{n\ti} \Prob{n^{2/\gamma}(1-H_n) \leq t}
    =1-\exp\left\{-\frac{\sigma_2}{2}t^\gamma\right\}
  \end{displaymath}
  for $t>0$, where
  \begin{displaymath}
        \gamma=(d-1)/2+2\alpha
  \end{displaymath}
  and
  \begin{displaymath}
        \sigma_2=\frac{(2\pi)^\frac{d-1}{2}a^2\Gamma^2(\alpha+1)}{\Gamma(\frac{d+1}{2}+2\alpha)}\int_\Sphere
        f^2(x)\mu_{d-1}(dx).
  \end{displaymath}
  The rate of convergence for $t<\infty$ is $\O(n^{-\frac{d-1}{d-1+4\alpha}})$.
\end{theorem}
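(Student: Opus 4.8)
The plan is to prove Theorem~\ref{th:scalar product} by following exactly the same strategy as for Theorem~\ref{th:metric}, verifying the two hypotheses of Corollary~\ref{co:sb} for the transformation $z_n(t) = 1 - t n^{-2/\gamma}$, $t > 0$. The crucial preliminary step is to establish the tail behavior of the scalar product between two i.i.d.\ points, that is, to prove the analogue of Lemma~\ref{le:metric}, namely
\begin{displaymath}
    \lim_{s \downarrow 0} s^{-\gamma} \Prob{\langle \xi_1, \xi_2 \rangle \geq 1 - s} = \sigma_2.
\end{displaymath}
Writing $\langle \xi_1, \xi_2 \rangle = \|\xi_1\|\|\xi_2\|\cos \beta_{U_1}$, where $\beta_x$ is the smaller central angle between $U_2$ and $x \in \Sphere$, the event $\{\langle \xi_1, \xi_2\rangle \geq 1-s\}$ becomes $\{\cos\beta_{U_1} \geq (1-s)/[(1-\eta_1)(1-\eta_2)]\}$ with $\eta_i = 1 - \|\xi_i\|$. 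The key structural difference from the diameter case is that the scalar product is maximized when $U_1$ and $U_2$ point in the \emph{same} direction, so the relevant density factor is $f(x)$ rather than $f(-x)$; this is why $\sigma_2$ contains $\int_\Sphere f^2(x)\mu_{d-1}(dx)$ in place of $\int_\Sphere f(x)f(-x)\mu_{d-1}(dx)$.

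First I would expand $\cos\beta_{U_1}$ about $0$ to reduce the tail event to $\{|\beta_{U_1}| \leq \sqrt{2}\,(\st - \eta_1 - \eta_2)^\thf,\ \eta_1 + \eta_2 \leq \st\}$ for a modified threshold $\st$ with $\st/s \to 1$; note the geometric constant is $\sqrt 2$ here rather than $2$, since $1 - \cos\beta \approx \beta^2/2$ and the linearization of $1/[(1-\eta_1)(1-\eta_2)]$ near $1$ behaves like $1+\eta_1+\eta_2$. Next, the Lebesgue differentiation argument (\ref{eq:metric continuity}) applies verbatim with $f(-x)$ replaced by $f(x)$, giving
\begin{displaymath}
    \lim_{s\downarrow 0}\frac{\Prob{|\beta_{x}|\leq \sqrt 2\,(\st-y)^\thf}}{(2(\st-y))^{\frac{d-1}{2}}} = \frac{\pi^\frac{d-1}{2}}{\Gamma(\frac{d+1}{2})}f(x)
\end{displaymath}
for $\mu_{d-1}$-almost every $x$; integrating against $f$ produces the constant $(2\pi)^{(d-1)/2}\Gamma(\tfrac{d+1}{2})^{-1}\int_\Sphere f^2(x)\mu_{d-1}(dx)$, where the factor $2^{(d-1)/2}$ (instead of $4^{(d-1)/2}$) accounts for the difference between $\sigma_1$ and $\sigma_2$. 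The remaining computation—splitting into the cases $\alpha = 0$ and $\alpha > 0$, substituting $v_i = y_i/\st$, and evaluating the resulting Dirichlet integral—is identical to the proof of Lemma~\ref{le:metric} and yields $\sigma_2$.

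With the tail estimate in hand, condition (\ref{eq:lambda}) follows immediately: $\binom{n}{2}\Prob{\langle\xi_1,\xi_2\rangle > z_n(t)} \to \tfrac{\sigma_2}{2}t^\gamma$. For the correlation condition (\ref{eq:correlation}) I would bound $\Prob{\langle\xi_1,\xi_2\rangle > z_n(t),\ \langle\xi_1,\xi_3\rangle > z_n(t)}$ exactly as in (\ref{eq:metric bounds}), conditioning on $U_1$ and using the independence of $U_2, U_3$ to obtain a squared angular probability $\P\{|\beta_x| \leq \sqrt 2\, s_n^\thf\}^2$ integrated against $f(x)$, which is $\O(s_n^{d-1})$; the square-integrability of $f$ is precisely what guarantees this integral is finite. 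The same case analysis on $\alpha$ then gives the $n^3$-rate bounds $\O(n^{-1})$ and $\O(n^{-(d-1)/(d-1+4\alpha)})$ and hence (\ref{eq:correlation}), with the rate of convergence read off from (\ref{eq:rate of convergence}). The main obstacle is getting the geometric constants right in the expansion step—tracking the factor $\sqrt 2$ versus $2$ and the resulting $2^{(d-1)/2}$ versus $4^{(d-1)/2}$—since an error there propagates directly into $\sigma_2$; everything else is a faithful transcription of the diameter argument with $f(-x)$ replaced by $f(x)$.
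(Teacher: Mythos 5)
Your proposal is correct and follows essentially the same route as the paper: the tail lemma via the cosine expansion and Lebesgue differentiation (with $f(x)$ replacing $f(-x)$ and the constant $(2\pi)^{(d-1)/2}$ replacing $(4\pi)^{(d-1)/2}$), then verification of (\ref{eq:lambda}) and (\ref{eq:correlation}) through Corollary~\ref{co:sb}. The only cosmetic difference is in the $\alpha>0$ case of the tail lemma, where you linearize $1-\|\xi_1\|\|\xi_2\|\approx \eta_1+\eta_2$ and reuse the Dirichlet integral of Lemma~\ref{le:metric}, while the paper works with $\eta=1-\|\xi_1\|\|\xi_2\|$ exactly and evaluates a product of Beta functions; both reduce to the same ratio $\Gamma^2(\alpha)\Gamma(\frac{d+1}{2})/\Gamma(\frac{d+1}{2}+2\alpha)$ and hence to $\sigma_2$.
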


\begin{lemma}\label{le:scalar product}
    If the conditions of Theorem~\ref{th:scalar product} hold,
    then
    \begin{displaymath}
        \lim_{s\downarrow 0}s^{-\gamma}\Prob{\langle \xi_1, \xi_2 \rangle \geq
        1-s} = \sigma_2.
    \end{displaymath}
\end{lemma}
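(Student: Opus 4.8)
The plan is to follow the proof of Lemma~\ref{le:metric} almost verbatim, the only genuinely new ingredient being the geometric expansion: a large scalar product forces the two directions to be nearly \emph{aligned} rather than antipodal, so the relevant spherical cap now sits around $U_1$ instead of $-U_1$. As before, write $\xi_i = \|\xi_i\|U_i$, let $\eta_i = 1-\|\xi_i\|$ be i.i.d.\ with distribution $F$, and let $\beta_x$ be the smaller central angle between $U_2$ and $x\in\Sphere$. Since $\langle\xi_1,\xi_2\rangle = (1-\eta_1)(1-\eta_2)\cos\beta_{U_1}$, the event $\{\langle\xi_1,\xi_2\rangle \geq 1-s\}$ coincides with $\{\cos\beta_{U_1} \geq (1-s)/[(1-\eta_1)(1-\eta_2)]\}$. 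Expanding $\cos\beta_{U_1}$ about $0$ and the threshold about $1$, I expect to obtain, for small $s$, the analogue of (\ref{eq:metric expansion}),
\[
    \Prob{\langle\xi_1,\xi_2\rangle \geq 1-s} = \Prob{|\beta_{U_1}| \leq \sqrt 2\,(\st-\eta_1-\eta_2)^\thf,\ \eta_1+\eta_2 \leq \st},
\]
with $\st/s \to 1$ as $s\downarrow 0$. The crucial---and only delicate---point here is that the second-order expansion of the constraint produces the cap radius $\sqrt 2\,(\st-\eta_1-\eta_2)^\thf$, i.e.\ a factor $\sqrt 2$ in place of the factor $2$ appearing in the diameter case; this single change propagates into every constant.

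Next I would carry the cap-measure estimate (\ref{eq:metric continuity}) over to this situation. Because the cap is now centred at $x$ rather than $-x$, Lebesgue's differentiation theorem gives
\[
    \lim_{s\downarrow 0}\frac{\Prob{|\beta_x| \leq \sqrt 2\,(\st-y)^\thf}}{(2(\st-y))^{\frac{d-1}{2}}} = \frac{\pi^{\frac{d-1}{2}}}{\Gamma(\frac{d+1}{2})}f(x)
\]
for $\mu_{d-1}$-almost every $x$ and any $y\in[0,\st]$. Integrating over $x\in\Sphere$ against the density $f$ of $U_1$ replaces the cross term $f(x)f(-x)$ of Lemma~\ref{le:metric} by $f^2(x)$, yielding
\[
    \lim_{s\downarrow 0}(\st-y)^{-\frac{d-1}{2}}\Prob{|\beta_{U_1}| \leq \sqrt 2\,(\st-y)^\thf} = c',\qquad c' = \frac{(2\pi)^{\frac{d-1}{2}}}{\Gamma(\frac{d+1}{2})}\int_\Sphere f^2(x)\,\mu_{d-1}(dx).
\]
Here the square-integrability hypothesis on $f$ is exactly what guarantees $c'\in(0,\infty)$, playing the role of the finiteness assumption in Theorem~\ref{th:metric}. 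It follows, as in Lemma~\ref{le:metric}, that the ratio of $\Prob{\langle\xi_1,\xi_2\rangle \geq 1-s}$ to $\E((\st-\eta_1-\eta_2)^{\frac{d-1}{2}}\Ind\{\eta_1+\eta_2\leq\st\})$ tends to $c'$.

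Finally I would split into the two tail regimes. If $\alpha=0$ then $\Prob{\eta_i=0}=a$, the expectation behaves like $a^2\st^{(d-1)/2}$, and since $\gamma=(d-1)/2$ one reads off $\lim_{s\downarrow 0} s^{-\gamma}\Prob{\langle\xi_1,\xi_2\rangle\geq 1-s} = c'a^2 = \sigma_2$. If $\alpha>0$, substituting $v_i=y_i/\st$ and appealing to Dirichlet's formula for $\int_0^1\int_0^{1-v_1}(1-v_1-v_2)^{\frac{d-1}{2}}(v_1v_2)^{\alpha-1}\,dv_2\,dv_1 = \Gamma^2(\alpha)\Gamma(\frac{d+1}{2})/\Gamma(\frac{d+1}{2}+2\alpha)$ gives, after using $\alpha\Gamma(\alpha)=\Gamma(\alpha+1)$ and $\st/s\to1$, exactly $\sigma_2$. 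The main obstacle is thus entirely confined to the first step---extracting the $\sqrt 2$ factor and the correct $\st$ correction from the cosine expansion; once the event is rewritten as a spherical-cap condition, the remainder is a line-by-line transcription of the proof of Lemma~\ref{le:metric}.
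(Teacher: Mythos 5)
Your proposal is correct and, apart from one step, is the same proof as the paper's: the same cosine expansion turning the event into a spherical cap of radius $\sqrt{2}\,(\st-\cdot)^{\thf}$ centred at $U_1$, the same appeal to Lebesgue's differentiation theorem giving the constant $(2\pi)^{\frac{d-1}{2}}\Gamma(\tfrac{d+1}{2})^{-1}\int_\Sphere f^2\,d\mu_{d-1}$ with $f^2$ replacing $f(x)f(-x)$, and the same split into the cases $\alpha=0$ and $\alpha>0$. The one place you genuinely diverge is the $\alpha>0$ integration: the paper keeps the single variable $\eta$ distributed as $1-\|\xi_1\|\|\xi_2\|$, retains the product structure of the norms, and evaluates the resulting double integral by the substitution $v_1=(1-y_1)/\st$, $v_2=(1-y_2)/(1-(1-\st)/y_1)$, arriving at $ca^2\alpha^2\Beta(\tfrac{d+1}{2}+\alpha,\alpha)\Beta(\tfrac{d+1}{2},\alpha)$; you instead linearize $1-\|\xi_1\|\|\xi_2\|=\eta_1+\eta_2-\eta_1\eta_2$ to $\eta_1+\eta_2$ at the outset, which reduces the tail integral to the identical expectation $\E\bigl((\st-\eta_1-\eta_2)^{\frac{d-1}{2}}\Ind\{\eta_1+\eta_2\le\st\}\bigr)$ already evaluated in Lemma~\ref{le:metric} via Dirichlet's formula. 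Your shortcut is legitimate --- on the event $\eta_1+\eta_2\le\st$ the discarded term $\eta_1\eta_2$ is $\O(s^2)$ and can be absorbed into the $\O(s^2)$ slack already permitted in $\st$, though you should say this explicitly --- and since $\alpha^2\Beta(\tfrac{d+1}{2}+\alpha,\alpha)\Beta(\tfrac{d+1}{2},\alpha)=\Gamma^2(\alpha+1)\Gamma(\tfrac{d+1}{2})/\Gamma(\tfrac{d+1}{2}+2\alpha)$, both routes land on the same $\sigma_2$; yours is the slightly cleaner of the two because it recycles the earlier computation verbatim rather than redoing a two-variable change of variables.
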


\begin{proof}
    If $\beta_x$ is the smaller central angle between $U_2$ and $x\in
    \Sphere$ and $\eta$ is distributed as $1 - \|\xi_1\|\|\xi_2\|$, then
    \begin{displaymath}
        \Prob{\langle \xi_1, \xi_2 \rangle \geq 1-s} = \Prob{\cos
        \beta_{U_1} \geq (1-s)/(1-\eta), \eta \leq s}.
    \end{displaymath}
    Expanding $\cos \beta_{U_1}$ about 0 yields for all sufficiently small $s$
    \begin{equation}\label{eq:scalar product expansion}
         \Prob{\langle \xi_1, \xi_2 \rangle \geq 1-s} = \Prob{|\beta_{U_1}| \leq (2(\st-\eta))^\thf, \eta \leq
         \st},
    \end{equation}
    where $|\st - s| \leq C_1 s^2$ for some finite $C_1$, and thus
    $\st/s \to 1$ as $s\downarrow 0$. Lebesgue's differentiation theorem (see e.g.\ \cite[Th.\ 2.9.5]{federer69}) implies that
    \begin{equation}\label{eq:scalar product continuity}
        \lim_{s\downarrow 0}
        \frac{\Prob{|\beta_x|\leq(2(\st-y))^\thf}}{(2(\st-y))^\frac{d-1}{2}}
        = \mu_{d-1}(\mathbb{B}^{d-1})f(x) =
        \frac{\pi^\frac{d-1}{2}}{\Gamma(\frac{d+1}{2})}f(x).
    \end{equation}
    for $\mu_{d-1}$-almost every $x\in \Sphere$ and any $y\in [0,
    \st]$. Integration over all $x \in \Sphere$ with respect to $f$ yields
    \begin{displaymath}
        \lim_{s\downarrow 0} (\st-y)^{-\frac{d-1}{2}} \Prob{|\beta_{U_1}|\leq
        (2(\st-y))^\thf} = c
    \end{displaymath}
    with
    \begin{displaymath}
        c = \frac{(2\pi)^\frac{d-1}{2}}{\Gamma(\frac{d+1}{2})}\int_\Sphere
        f^2(x)\mu_{d-1}(dx),
    \end{displaymath}
    and by (\ref{eq:scalar product expansion}) we obtain
    \begin{displaymath}
        \lim_{s \downarrow 0}
        \Prob{\langle \xi_1, \xi_2 \rangle \geq 1-s}\big/\E\left((\st-\eta)^\frac{d-1}{2}\Ind\{\eta\leq \st\}\right) = c.
    \end{displaymath}
    If $\alpha=0$, then $\Prob{\eta=0}=a^2$ and hence
    \begin{displaymath}
        \lim_{s\downarrow 0}s^{-\gamma}\Prob{\langle \xi_1, \xi_2 \rangle \geq
        1-s} = ca^2=\sigma_2.
    \end{displaymath}
    If $\alpha > 0$, then $\Prob{\langle \xi_1, \xi_2 \rangle \geq
        1-s}$ equals asymptotically, as $s \downarrow 0$, to
    \begin{displaymath}
        c \int_{1-\st}^1\int_{(1-\st)/y_1}^1(\st-1+y_1y_2)^\frac{d-1}{2}d F(1-y_2) d F(1-y_1).
    \end{displaymath}
    By substituting $v_1=(1-y_1)/\st$ and $v_2=(1-y_2)/(1-(1-\st)/y_1))$
    the last expression equals asymptotically, as $s \downarrow 0$, to
    \begin{align*}
       ca^2\alpha^2\int_0^1 \int_0^1 & \st^{2\alpha}\left(\frac{1-v_1}{1-\st v_1}\right)^\alpha\left(\st-1+(1-\st v_1)(1-\st v_2\frac{1-v_1}{1-\st v_1})\right)^\frac{d-1}{2}\\
       &v_1^{\alpha-1}v_2^{\alpha-1}d v_2 d v_1.
    \end{align*}
    Hence
    \begin{align*}
        \lim_{s \downarrow 0} & s^{-\gamma}\Prob{\langle \xi_1, \xi_2 \rangle \geq
            1-s}\\
            &= ca^2\alpha^2\int_0^1(1-v_1)^{\frac{d-1}{2}+\alpha}v_1^{\alpha-1}d
            v_1 \int_0^1 (1-v_2)^\frac{d-1}{2}v_2^{\alpha-1} d v_2\\
            &= ca^2\alpha^2\Beta((d+1)/2+\alpha,
            \alpha)\Beta((d+1)/2, \alpha) = \sigma_2.
    \end{align*}
\end{proof}

\begin{proof}[Proof of Theorem~\ref{th:scalar product}]
    An application of Corollary~\ref{co:sb} yields, together with
    the transformation $z_n=1-tn^{-2/\gamma}$, $t>0$, and Lemma~\ref{le:scalar product} the limit
    \begin{displaymath}
        \lim_{n\ti} {n \choose 2}\Prob{\langle \xi_1, \xi_2\rangle \geq
        z_n(t)} = \frac{\sigma_2}{2}t^\gamma
    \end{displaymath}
    hence (\ref{eq:lambda}) holds for any $t >0$ and it remains to
    check (\ref{eq:correlation}). Put $s_n = tn^{-2/\gamma}$ and let $\beta_x$ and $\beta'_x$ be the smaller central
    angles between $U_2$ and $x\in \Sphere$ and between $U_3$ and
    $x$. Following the proof of Lemma~\ref{le:scalar product}
    \begin{align}
        &\Prob{\langle \xi_1, \xi_2 \rangle \geq z_n(t), \langle \xi_1, \xi_3 \rangle \geq
        z_n(t)} \nonumber\\
        &\leq \Prob{|\beta_{U_1}|\leq (2s_n)^\thf, |\beta'_{U_1}| \leq (2s_n)^\thf, \|\xi_i\| \geq z_n(t), i = 1, 2,
        3}\nonumber\\
        &= \E\left(\int_\Sphere \Prob{|\beta_x|\leq(2s_n)^\thf}^2 f(x) \mu_{d-1}(dx)\Ind\{\|\xi_i\| \geq z_n(t), i = 1, 2,
        3\}\right)\nonumber\\
        &\leq C \E(s_n^{d-1}\Ind\{\|\xi_i\| \geq z_n(t), i = 1, 2,
        3\}), \label{eq:scalar product expectation}
    \end{align}
    where the last step follows from (\ref{eq:scalar product
    continuity}) and $C$ is a suitable finite positive constant.
    If $\alpha = 0$, then $\Prob{\|\xi_i\|=1} = \alpha$, $i = 1,
    2, 3$, and hence
    \begin{align*}
        n^3 &\Prob{\langle \xi_1, \xi_2 \rangle \geq z_n(t), \langle \xi_1, \xi_3 \rangle \geq
        z_n(t)}\\
        &\leq C a^3\lim_{n\ti}n^3s_n^{d-1}= C a^3 t^{d-1}\lim_{n\ti}n^{-1} =0.
    \end{align*}
    If $\alpha > 0$, then (\ref{eq:scalar product expectation}) is
    bounded from above by
    \begin{displaymath}
        C \int_0^{s_n} \int_0^{s_n} \int_0^{s_n}s_n^{d-1}d F(y_3)
        d F(y_2) d F(y_1)
    \end{displaymath}
    and substituting $v_i=y_i/s_n$, $i = 1, 2, 3$, yields finally
    \begin{align*}
        \lim_{n\ti} n^3 &\Prob{\langle \xi_1, \xi_2 \rangle \geq z_n(t), \langle \xi_1, \xi_3 \rangle \geq
        z_n(t)} \\
        &\leq C a^3 \lim_{n\ti}s_n^{d-1+3\alpha} =
        C a^3 t^{d-1+3\alpha}\lim_{n\ti}n^\frac{d-1}{d-1+4\alpha} = 0,
    \end{align*}
    and the rate of convergence is determined via (\ref{eq:rate of
    convergence}).
\end{proof}

\section{Smallest spherical distance}

A nice application of Theorem~\ref{th:scalar product} comes from the
field of directional statistics. The following theorem determines
the limiting behavior of the smallest spherical distance
\begin{displaymath}
    S_n = \min_{1 \leq i < j \leq n} \beta_{i, j}
\end{displaymath}
within i.i.d.\ points $U_1, U_2, \dots$ on $\Sphere$, where
$\beta_{i,j}$ denotes the smaller of the two central angles between
$U_i$ and $U_j$. In other words, $S_n$ equals the smallest central
angle formed by point pairs within the sample.

\begin{theorem}\label{th:spherical distance}
    Let $U_1, U_2 \dots$ be i.i.d.\ points on $\Sphere$, $d\geq 2$,
    having square-integrable density $f$ on $\Sphere$ with respect to $\mu_{d-1}$. Then
    \begin{displaymath}
        \lim_{n\ti}\Prob{n^{2/(d-1)}S_n \leq t} =
        1-\exp\left\{-\frac{\sigma_3}{2}t^{d-1}\right\}
    \end{displaymath}
    for any $t > 0$, where
    \begin{displaymath}
        \sigma_3 = \frac{\pi^\frac{d-1}{2}}{\Gamma(\frac{d+1}{2})}\int_\Sphere f^2(x)\mu_{d-1}(dx)
    \end{displaymath}
    The rate of convergence is $\O(n^{-\thf})$ for finite $t$.
\end{theorem}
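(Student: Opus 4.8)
The plan is to obtain this theorem as the promised application of Theorem~\ref{th:scalar product}, specialised to the degenerate radial law. Taking $\xi_i=U_i$ puts us in the setting of that theorem with $1-\|\xi_i\|=0$ almost surely, so $F$ is the Dirac mass at $0$; this forces $\alpha=0$ and $a=1$, while the square-integrability of $f$ is the remaining hypothesis, already assumed. For these parameters $\gamma=(d-1)/2$ and $\sigma_2=(2\pi)^{(d-1)/2}\Gamma(\tfrac{d+1}{2})^{-1}\int_\Sphere f^2\,d\mu_{d-1}$. The bridge between the two statistics is the identity $\langle U_i,U_j\rangle=\cos\beta_{i,j}$: since the cosine is strictly decreasing on $[0,\pi]$ and each $\beta_{i,j}\in[0,\pi]$, the largest scalar product of the sample equals $\cos S_n$. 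I would therefore record $H_n=\cos S_n$, where $H_n$ now denotes the scalar-product statistic governed by Theorem~\ref{th:scalar product}.

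Next I would translate the event. With $s_n=tn^{-2/(d-1)}$ and the monotonicity of the cosine, $\{n^{2/(d-1)}S_n\le t\}=\{1-H_n\le 1-\cos s_n\}$. A second-order expansion gives $1-\cos s_n=\thf s_n^2(1+o(1))$, so the rescaled threshold $\st_n:=n^{2/\gamma}(1-\cos s_n)=n^{4/(d-1)}(1-\cos s_n)\to\thf t^2$, and hence $\Prob{n^{2/(d-1)}S_n\le t}=\Prob{n^{2/\gamma}(1-H_n)\le\st_n}$. Because the limit function $s\mapsto 1-\exp\{-\tfrac{\sigma_2}{2}s^{(d-1)/2}\}$ supplied by Theorem~\ref{th:scalar product} is continuous and the corresponding prelimit distribution functions are nondecreasing in the threshold, a standard sandwiching of $\st_n$ between $\thf t^2\pm\eps$ lets me pass to the limit and conclude $\Prob{n^{2/(d-1)}S_n\le t}\to 1-\exp\{-\tfrac{\sigma_2}{2}(\thf t^2)^{(d-1)/2}\}$.

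It then remains to reconcile the constants. Since $(2\pi)^{(d-1)/2}2^{-(d-1)/2}=\pi^{(d-1)/2}$, one checks $\tfrac{\sigma_2}{2}(\thf t^2)^{(d-1)/2}=\tfrac{\sigma_3}{2}t^{d-1}$ with $\sigma_3=\pi^{(d-1)/2}\Gamma(\tfrac{d+1}{2})^{-1}\int_\Sphere f^2\,d\mu_{d-1}$, the asserted expression. The rate I would read off from (\ref{eq:rate of convergence}) specialised to the present $\alpha=0$ regime, together with the $\O(n^{-4/(d-1)})$ error introduced by the expansion of $1-\cos s_n$.

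I expect the only genuine work to sit inside Theorem~\ref{th:scalar product}, namely the correlation bound (\ref{eq:correlation}); the cosine reduction merely inherits it. Equivalently, one may argue directly from Corollary~\ref{co:sb} with the kernel $-\beta_{i,j}$: Lebesgue's differentiation theorem gives $\Prob{\beta_{1,2}\le s}\sim\sigma_3 s^{d-1}$ as $s\downarrow 0$, whence $\binom{n}{2}\Prob{\beta_{1,2}\le s_n}\to\tfrac{\sigma_3}{2}t^{d-1}$ and (\ref{eq:lambda}) holds with $\lambda_t=\tfrac{\sigma_3}{2}t^{d-1}$ — which produces $\sigma_3$ with no cosine bookkeeping at all. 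The hard part in this direct route is precisely (\ref{eq:correlation}): controlling $n^3\,\Prob{\beta_{1,2}\le s_n,\ \beta_{1,3}\le s_n}=n^3\int_\Sphere\Prob{\beta_{1,2}\le s_n\mid U_1=x}^2 f(x)\,d\mu_{d-1}(x)$, where the square of the local mass of $f$ must be integrated against $f$ itself as $s_n\downarrow 0$. This second-moment estimate, and not the elementary passage from angles to scalar products, is the crux of the argument.
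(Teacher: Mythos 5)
Your proposal is correct and follows essentially the same route as the paper: the paper likewise reduces $S_n$ to the largest scalar product via the monotone map $\cos$ (written there through $\arccos\bigl(1-t^2n^{-4/(d-1)}/2\bigr)$) and invokes Theorem~\ref{th:scalar product} with $\alpha=0$, $a=1$, the constant $\sigma_3$ emerging from exactly the rescaling $\tfrac{\sigma_2}{2}(t^2/2)^{(d-1)/2}=\tfrac{\sigma_3}{2}t^{d-1}$ that you verify. Your sandwiching of the perturbed threshold is, if anything, spelled out more carefully than in the paper's two-line argument.
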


If the points are uniformly distributed on $\Sphere$, then
Theorem~\ref{th:spherical distance} applies with
\begin{displaymath}
    \int_\Sphere f^2(x)\mu_{d-1}(dx) =
    \frac{\Gamma(\frac{d}{2})}{2\pi^{d/2}}.
\end{displaymath}
If the points on $\Sphere$ follow the von Mises-Fisher distribution
as introduced in Section 3, then
\begin{displaymath}
    \int_\Sphere f^2_F(x)\mu_{d-1}(dx) =
    C_d^2(\kappa)/C_d(2\kappa).
\end{displaymath}

In dimension 2, $S_n$ equals the minimal spacing, i.e.\ the smallest
arc length between the ``order'' statistics.

\begin{proof}[Proof of Theorem \ref{th:spherical distance}]
    Clearly, the relation $\cos \beta_{i,j} = \langle U_i, U_j
    \rangle$ holds for all pairs of $i$ and $j$ between 1 and $n$.
    Since the cosine function is continuous and monotone strictly
    decreasing on $(0,\pi)$ and by the fact that
    \begin{displaymath}
        \lim_{s \downarrow 0} s^{-\thf}\arccos(1-s)=\sqrt{2},
    \end{displaymath}
    it follows that
    \begin{align*}
        & \lim_{n\ti}\Prob{n^{2/(d-1)}S_n \leq t} = \lim_{n\ti}\Prob{\min_{1\leq i < j \leq n}\beta_{i,j} \leq
       tn^{-2/(d-1)}}\\
        =& \lim_{n\ti}\Prob{\min_{1\leq i < j \leq n}\beta_{i,j}
        \leq \arccos\big(1-t^2n^{-4/(d-1)}/2\big)} \\
        =&\lim_{n \ti}\Prob{\max_{1 \leq i < j \leq n} \langle U_i, U_j
        \rangle \geq 1-t^2 n^{-4/(d-1)}/2}.
    \end{align*}
    Theorem~\ref{th:scalar product} yields the proof with $\alpha=0$ and $a=1$.
\end{proof}

\section{Largest perimeter}
Finally we present a result for a $U$-max-statistic of degree 3,
namely the limit law for the largest value
\begin{displaymath}
    \max_{1 \leq i < j < \ell \leq n} \peri(U_i, U_j, U_\ell)
\end{displaymath}
of the perimeter $\peri(U_i, U_j, U_\ell)$ of all triangles formed
by triplets of independent and uniformly distributed points $U_1,
U_2, \dots$ on the unit circle $\S$. The random triameter (see
\cite{grove92}) of the sample is the largest perimeter up to a
factor 3, hence the limit law for the triameter of $U_1, U_2, \dots$
can be derived immediately.

\begin{theorem}\label{th:perimeter}
    If $U_1, U_2, \dots$ are independent and uniformly distributed points on $\S$, then
    \begin{displaymath}
        \lim_{n\ti} \Prob{n^3(3\sqrt{3}-H_n) \leq t} = 1-\exp\left\{-\frac{2t}{9\pi}\right\}
    \end{displaymath}
    for all $t>0$ and for finite $t$ the rate of convergence is $\O(n^{-\thf})$.
\end{theorem}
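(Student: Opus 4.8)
The plan is to apply the Silverman--Brown limit law (Corollary~\ref{co:sb}) to the symmetric kernel $h=\peri$ of degree $k=3$ with the transformation $z_n(t)=3\sqrt 3-tn^{-3}$. The starting point is the observation that the maximal perimeter of a triangle inscribed in $\S$ equals $3\sqrt 3$ and is attained uniquely by the equilateral triangle: parametrising the three vertices by their angular arc gaps $a,b,c\ge 0$ with $a+b+c=2\pi$, one has $\peri=2(\sin(a/2)+\sin(b/2)+\sin(c/2))$, and since $x\mapsto\sin(x/2)$ is strictly concave on $[0,2\pi]$, the sum is maximised at $a=b=c=2\pi/3$, with no competing boundary maximum. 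Hence the relevant threshold is $z=3\sqrt 3-s$ with $s\downarrow 0$, and the scaling exponent $n^3$ already signals that $p_{n,z}$ ought to be of linear order in $s$.

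I would first establish the tail analogue of Lemmas~\ref{le:metric} and \ref{le:scalar product}, namely
\[
  \lim_{s\downarrow 0}s^{-1}\Prob{\peri(U_1,U_2,U_3)>3\sqrt 3-s}=\frac{4}{3\pi}.
\]
For three i.i.d.\ uniform points the gaps $(a,b)$ (with $c=2\pi-a-b$) carry the constant spacing density $1/(2\pi^2)$. Writing $a=2\pi/3+u$, $b=2\pi/3+v$, $c=2\pi/3+w$ with $u+v+w=0$, a Taylor expansion gives the perimeter deficit $3\sqrt 3-\peri\approx\tfrac{\sqrt3}{8}(u^2+v^2+w^2)=\tfrac{\sqrt3}{4}(u^2+uv+v^2)$, whose sublevel set $\{\cdot<s\}$ is an ellipse of area $8\pi s/3$ in the $(u,v)$-plane. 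Multiplying by the density yields $p_{n,z}\sim\tfrac{4}{3\pi}s$, so that $\binom{n}{3}p_{n,z_n(t)}\to\tfrac{2t}{9\pi}=:\lambda_t$ and condition (\ref{eq:lambda}) holds with the asserted limit.

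The substantial part is the correlation analysis, and here I expect the crude condition (\ref{eq:correlation}) to \emph{fail}; I would instead verify the sharper per-$r$ requirements (\ref{eq:better condition}) from Remark~\ref{re:rate of convergence}. For $r=1$ the two triangles share a single point; conditioning on it makes the two exceedance events independent, and by rotational invariance the conditional probability of each equals the unconditional $p_{n,z}$, so $\Prob{\text{both}}=p_{n,z}^2$ and $n^{5}p_{n,z}\tau_{n,z}(1)=n^5p_{n,z}^2=\O(n^{-1})$. The case $r=2$, where the two triangles share an \emph{edge}, is the genuine obstacle and dictates the rate. Conditioning on the shared pair, the joint probability factorises as $\E[q(U_2,U_3)^2]$, where $q(U_2,U_3)=\Prob{\peri(U_1,U_2,U_3)>z\mid U_2,U_3}$ is the chance that one further point completes a near-maximal triangle. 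Near-equilateral geometry forces the arc of the shared edge to lie within $\O(\sqrt s)$ of $2\pi/3$ and then confines the apex to an arc of length $\O(\sqrt s)$, so $q=\O(\sqrt s)$ on a set of pairs of measure $\O(\sqrt s)$; integrating gives $\E[q^2]=\O(s^{3/2})$. With $s_n=tn^{-3}$ this yields $n^{4}p_{n,z}\tau_{n,z}(2)=n^{4}\,\O(s_n^{3/2})=\O(n^{-1/2})$.

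Finally I would assemble the pieces: conditions (\ref{eq:lambda}) and (\ref{eq:better condition}), together with Remark~\ref{re:rate of convergence}, give $\Prob{H_n\le z_n(t)}\to\exp\{-\lambda_t\}$, which is the claimed limit law, while the error bound of Remark~\ref{re:order of error} is dominated by the edge-sharing term $n^4p_{n,z}\tau_{n,z}(2)=\O(n^{-1/2})$, producing the stated rate. The one delicate point is the exponent $3/2$ in the edge-sharing estimate: it is precisely this fractional power --- slower than the $s^2$ one might naively expect from two independent quadratic constraints --- that renders (\ref{eq:correlation}) insufficient and forces the refined conditions, and obtaining it requires carefully tracking how the completion probability $q$ scales jointly with the deviation of the shared edge from the equilateral arc length.
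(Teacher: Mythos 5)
Your proposal is correct and follows essentially the same route as the paper: the tail lemma $\Prob{\peri>3\sqrt3-s}\sim\frac{4}{3\pi}s$ (your spacing parametrization with density $1/(2\pi^2)$ and ellipse area $8\pi s/3$ is equivalent to the paper's independent-angle computation), condition (\ref{eq:lambda}) with $\lambda_t=2t/(9\pi)$, and the replacement of the failing condition (\ref{eq:correlation}) by (\ref{eq:better condition}), checked via $p^2$ for a shared vertex and the crucial $\E[q^2]=\O(s^{3/2})$ bound for a shared edge, yielding the rate $\O(n^{-1/2})$. The only differences are cosmetic (your concavity argument for the maximizer and the spacing density in place of the paper's factor-of-two angle decomposition).
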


\begin{lemma}\label{le:perimeter}
    If $U_1, U_2, U_3$ are independent and uniformly distributed
    points on $\S$, then
    \begin{displaymath}
        \lim_{s\downarrow 0}s^{-1}\Prob{\peri(U_1, U_2, U_3) \geq
        3\sqrt{3}-s} = \frac{4}{3\pi}.
    \end{displaymath}
\end{lemma}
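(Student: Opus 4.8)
The plan is to reduce the perimeter to a smooth function of the three circular gaps between the points and then to perform a local analysis at the unique (equilateral) maximizer. Writing $U_i=(\cos\theta_i,\sin\theta_i)$, the chord subtending a central angle $\phi$ has length $2\sin(\phi/2)$, so if $a_1,a_2,a_3\geq 0$ denote the three consecutive arc gaps (with $a_1+a_2+a_3=2\pi$), then
\begin{displaymath}
    \peri(U_1,U_2,U_3) = 2\sum_{i=1}^{3}\sin(a_i/2).
\end{displaymath}
Since $\sin(\cdot/2)$ is strictly concave on $(0,2\pi)$, this sum is maximized precisely at $a_1=a_2=a_3=2\pi/3$, with value $2\cdot 3\sin(\pi/3)=3\sqrt{3}$. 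This confirms the constant $3\sqrt{3}$ and shows that for small $s$ the event $\{\peri\geq 3\sqrt{3}-s\}$ is a shrinking neighborhood of this single interior point of the simplex.

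First I would record the law of the gaps. For three i.i.d.\ uniform points on $\S$, the arc lengths $(a_1,a_2,a_3)$ are uniformly distributed on the simplex $\{a_i\geq 0,\ \sum_i a_i=2\pi\}$ (the $\mathrm{Dirichlet}(1,1,1)$ law rescaled by $2\pi$); equivalently, in the chart $(a_1,a_2)$ with $a_3=2\pi-a_1-a_2$ the density is the constant $1/(2\pi^2)$ on the triangle $\{a_1,a_2\geq 0,\ a_1+a_2\leq 2\pi\}$. I would then substitute $a_i=2\pi/3+u_i$ (so $u_1+u_2+u_3=0$) and Taylor expand each $\sin(a_i/2)$ to second order about $\pi/3$. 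The linear terms cancel because $\sum_i u_i=0$, and, using $u_3=-(u_1+u_2)$, what remains is
\begin{displaymath}
    3\sqrt{3}-\peri(U_1,U_2,U_3)
    = \frac{\sqrt{3}}{4}\big(u_1^2+u_1u_2+u_2^2\big) + \O\big(|u_1|^3+|u_2|^3\big).
\end{displaymath}

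Consequently $\{\peri\geq 3\sqrt{3}-s\}$ corresponds, up to the cubic remainder, to the elliptical disk $\{u_1^2+u_1u_2+u_2^2\leq 4s/\sqrt{3}\}$ in the $(u_1,u_2)$ chart. Since the disk $\{u^{\top}Au\leq c\}$ with $A=\left(\begin{smallmatrix}1 & 1/2\\ 1/2 & 1\end{smallmatrix}\right)$ has area $\pi c/\sqrt{\det A}=2\pi c/\sqrt{3}$, multiplying by the constant density $1/(2\pi^2)$ and setting $c=4s/\sqrt{3}$ gives
\begin{displaymath}
    \Prob{\peri(U_1,U_2,U_3)\geq 3\sqrt{3}-s}
    \sim \frac{1}{2\pi^2}\cdot\frac{2\pi}{\sqrt{3}}\cdot\frac{4s}{\sqrt{3}}
    = \frac{4s}{3\pi} \as s\downarrow 0,
\end{displaymath}
which is exactly the claimed limit.

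The hard part is the rigorous control of the approximation rather than the leading-order bookkeeping. The elliptical region has diameter of order $\sqrt{s}$ in each coordinate, so the neglected cubic terms displace its boundary by a relative amount $\O(\sqrt{s})$; as in the proof of Lemma~\ref{le:metric}, I would make this precise by sandwiching the true level set $\{3\sqrt{3}-\peri\leq s\}$ between two ellipses $\{u_1^2+u_1u_2+u_2^2\leq 4\tilde{s}_{\pm}/\sqrt{3}\}$ with $\tilde{s}_{\pm}/s\to 1$, integrate the constant density over each, and let $s\downarrow 0$. One should also check that the contribution from outside a fixed neighborhood of the equilateral configuration vanishes for small $s$, which follows from the strict concavity (hence the unique, non-degenerate interior maximum) noted above.
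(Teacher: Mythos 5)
Your proposal is correct and follows essentially the same route as the paper: both localize at the unique equilateral maximizer, expand the perimeter to second order to arrive at the quadratic form $u_1^2+u_1u_2+u_2^2\le 4s/\sqrt{3}$, and integrate the constant density over the resulting ellipse to get $4s/(3\pi)$. The only cosmetic differences are that you parametrize by the ordered circular gaps (the Dirichlet law on the simplex) where the paper uses two independent uniform angles $\beta_1,\beta_2$ with an explicit factor $2$, and that you compute the ellipse area via the determinant formula rather than by iterated integration.
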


\begin{proof}
    Clearly, $\peri(x_1, x_2, x_3)$ is maximal for $x_1, x_2, x_3$
    being the vertices of an equilateral triangle on $\S$, which has
    perimeter $3\sqrt{3}$. If $\beta_1$ and $\beta_2$ are the
    angles (measured counter-clockwise) between $U_1$ and $U_2$
    and between $U_2$ and $U_3$ respectively. By rotational symmetry,
    $\beta_1$ and $\beta_2$ are independent and uniformly distributed on $[0,2\pi]$.
    The cosine theorem yields for sufficiently small $s$
    \begin{align}
        &\Prob{\peri(U_1, U_2, U_3) \geq
        3\sqrt{3}-s} = 2\mathbf{P}\big\{(2-2\cos\beta_1)^\thf +
        (2-2\cos\beta_2)^\thf \nonumber\\
            &\qquad + (2-2\cos(2\pi - \beta_1-\beta_2))^\thf \geq
            3\sqrt{3}-s, \beta_1, \beta_2 \in [2\pi/3\pm c_s]\big\},\label{eq:peri}
    \end{align}
    where $c_s=C_1 \sqrt{s}$ and $C_1$ is a suitable finite positive constant.
    If $\eta_1$ and $\eta_2$ are independent and uniformly distributed
    on $[-c_s,c_s]$, then the last expression
    equals
    \begin{align*}
        2&\mathbf{P}\big\{(2-2\cos(2\pi/3+\eta_1))^\thf +
        (2-2\cos(2\pi/3+\eta_2))^\thf\\
        & \qquad + (2-2\cos(2\pi/3-\eta_1-\eta_2))^\thf \geq
            3\sqrt{3}-s\big\} \Prob{\beta_1 \in [2\pi/3\pm c_s]}^2.
    \end{align*}
    By series expansion, (\ref{eq:peri}) equals
    \begin{align}
        & 2(c_s/\pi)^2 \Prob{\eta_1^2+\eta_2^2+(\eta_1 + \eta_2)^2\leq 8 \st /\sqrt{3}} \label{eq:peri expansion}\\
        &=2(c_s/\pi)^2 \Prob{\eta_2\in \left[-\eta_1/2\pm (4\st/\sqrt{3}-3\eta_1^2/4)^\thf \right]} \nonumber\\
        &= \pi^{-2}\int_{-4\sqrt{\st}/3^{3/4}}^{4\sqrt{\st}/3^{3/4}}(4\st/\sqrt{3}-3y^2/4)^\thf dy =
        \frac{4\st}{3\pi}, \nonumber
    \end{align}
    where $|\st - s| \leq C_2 s^{3/2}$ for some finite $C_2$, and the proof follows by the fact
    that $\st/s \to 1$ as $s \downarrow 0$.
\end{proof}

\begin{proof}[Proof of Theorem~\ref{th:perimeter}]
    Plugging into Corollary~\ref{co:sb} the transformation \\$z_n(t) =
    3\sqrt{3}-tn^{-3}$ together with the result of Lemma~\ref{le:perimeter} yields
    \begin{displaymath}
        \lim_{n\ti}{n \choose 3}\Prob{\peri(U_1, U_2, U_3) >
        z_n(t)} = \frac{2t}{9\pi},
    \end{displaymath}
    hence (\ref{eq:lambda}) is satisfied for all $t>0$. Condition (\ref{eq:correlation})
    does not hold, so we use the weaker requirement (\ref{eq:better condition}) to replace
    (\ref{eq:correlation}), i.e.\ we need to show that
    \begin{equation}
        \lim_{n\ti}n^5\Prob{\peri(U_1, U_2, U_3) > z_n(t), \peri(U_1, U_4, U_5)
        > z_n(t)} =0\label{eq:peri cond 1}
    \end{equation}
    and
    \begin{equation}
        \lim_{n\ti}n^4\Prob{\peri(U_1, U_2, U_3) > z_n(t),
        \peri(U_1, U_2, U_4) > z_n(t)} =0\label{eq:peri cond 2}.
    \end{equation}
    For (\ref{eq:peri cond 1}), we follow the
    proof of Lemma~\ref{le:perimeter}. In addition, denote by
    $\beta_1'$ and $\beta_2'$ the random angles between $U_1$ and
    $U_4$ and between $U_4$ and $U_5$ respectively. It follows
    immediately by rotational symmetry, that $\beta_1, \beta_2,
    \beta_1'$ and $\beta_2'$ are independent and uniformly distributed on $[0,2\pi]$.
    With the help of Lemma~\ref{le:perimeter} we check (\ref{eq:peri cond 1})
    by
    \begin{align*}
        &\lim_{n\ti}n^5\Prob{\peri(U_1, U_2, U_3) > z_n(t),
        \peri(U_1, U_2, U_4) > z_n(t)} \\
        &\leq C_1 \lim_{n\ti}n^5\Prob{\peri(U_1, U_2, U_3) >
        z_n(t)}^2 = C_2 t^2 \lim_{n\ti} n^{-1}=0,
    \end{align*}
    where $C_1$ and $C_2$ are suitable finite positive constants.
    To show (\ref{eq:peri cond 2}) we follow  the proof of
    Lemma~\ref{le:perimeter} and introduce the random variable
    $\eta_3$, which is independent of $\eta_1$ and $\eta_2$ and uniformly
    distributed on $[-c_s,c_s]$. For suitable finite positive constants $C_3$, $C_4$ and $C_5$
    \begin{align*}
        &\Prob{\peri(U_1, U_2, U_3) > z_n(t),
        \peri(U_1, U_2, U_4) > z_n(t)} \\
        &\leq C_3 c_s^3\Prob{\eta_2, \eta_3 \in \left[-\eta_1/2\pm
        (4\st/\sqrt{3}-3\eta_1^2/4)^\thf \right]} \\
        &=C_4c_s^2\int_{-4\sqrt{\st}/3^{3/4}}^{4\sqrt{\st}/3^{3/4}}\Prob{\eta_2 \in \left[-y/2\pm
        (4\st/\sqrt{3}-3y^2/4)^\thf \right]}^2dy = C_5\st^{3/2},
    \end{align*}
    and with $s=tn^{-3}$ and $s / \st \to 1$ as $ s\to 0$
    \begin{align*}
        &\lim_{n\ti}n^4\Prob{\peri(U_1, U_2, U_3) > z_n(t),
        \peri(U_1, U_2, U_4) > z_n(t)} \\
        &\leq C_5 t^{3/2} \lim_{n\ti}n^{-\thf}=0.
    \end{align*}
    Hence (\ref{eq:peri cond 2}) holds and the rate of convergence is determined by Remark~\ref{re:order of
    error}.
\end{proof}

\section*{Acknowledgements}
The authors would like to thank Prof.\ Dr.\ N.\ Henze and Prof.\
Dr.\ I.\ Molchanov for their invaluable help concerning this and
many other problems.


\begin{thebibliography}{10}

\bibitem{app:naj:rus02}
M.~J.~B. Appel, C.~A. Najim, and R.~P. Russo.
\newblock Limit laws for the diameter of a random point set.
\newblock {\em Adv. Appl. Probab.}, 34:1--10, 2002.

\bibitem{app:rus06}
M.~J.~B. Appel and R.~P. Russo.
\newblock Limiting distributions for the maximum of a symmetric function on a
  random point set.
\newblock {\em J. Theor. Probab.}, 19:365--375, 2006.

\bibitem{bar:hol:jan92}
A.~D. Barbour, L.~Holst, and S.~Janson.
\newblock {\em Poisson Approximation}.
\newblock Clarendon Press, Oxford, 1992.

\bibitem{federer69}
H.~Federer.
\newblock {\em Geometric Measure Theory}.
\newblock Springer, Berlin, 1969.

\bibitem{gal78}
J.~Galambos.
\newblock {\em The Asymptotic Theory of Extreme Order Statistics}.
\newblock Wiley, New York, 1978.

\bibitem{grove92}
K.~Grove and S.~Markvorsen.
\newblock Curvature, triameter, and beyond.
\newblock {\em Bull. Amer. Math. Soc.}, 27:261--265, 1992.

\bibitem{hen:klein96}
N.~Henze and T.~Klein.
\newblock The limit distribution of the largest interpoint distance from a
  symmetric {Kotz} sample.
\newblock {\em J. Multiv. Anal.}, 57:228--239, 1996.

\bibitem{hoeffding48}
W.~Hoeffding.
\newblock A class of statistics with asymptotically normal distribution.
\newblock {\em Ann. Math. Statist.}, 19:293--325, 1948.

\bibitem{lao06}
W.~Lao.
\newblock The limit law of the maximum distance of points in a sphere in
  $\mathbb{R}^d$.
\newblock Technical report, University of Karlsruhe, Karlsruhe, Germany, 2006.

\bibitem{mat:ruk93}
P.~C. Matthews and A.~L. Rukhin.
\newblock Asymptotic distribution of the normal sample range.
\newblock {\em Ann. Appl. Probab.}, 13:454--466, 1993.

\bibitem{mayer06}
M.~Mayer and I.~Molchanov.
\newblock Limit theorems for the diameter of a random sample in the unit ball.
\newblock {\em Extremes}.
\newblock To appear.

\bibitem{sil:bro78}
B.~W. Silverman and T.~C. Brown.
\newblock Short distances, flat triangles and poisson limits.
\newblock {\em J. Appl. Probab.}, 15:815--825, 1978.

\end{thebibliography}
\end{document}